\newtheorem{theorem}{Theorem}[section]
\newtheorem{lemma}[theorem]{Lemma}
\theoremstyle{definition}
\newtheorem{remark}[theorem]{Remark}
\numberwithin{equation}{section}
\def\dsum{\displaystyle\sum}
\newcommand{\al}{\alpha}
\newcommand{\la}{\lambda}
\begin{document}
\title[ Generalization of Newton-Maclaurin's
inequalities]{A generalization of Newton-Maclaurin's inequalities}

\author{Changyu Ren}
\address{Changyu Ren, School of Mathematical Science, Jilin University, Changchun, 130012, Jilin Province, P.R. China}\email{rency@jlu.edu.cn}

\thanks{Research of author is supported  by an NSFC Grant No. 11871243}

\begin{abstract}
In this paper, we prove Newton-Maclaurin type inequalities for
functions obtained by linear combination of two neighboring primary
symmetry functions, which is a generalization of the classical
Newton-Maclaurin inequality.

\end{abstract}

\keywords {Primary symmetry function, Newton-Maclaurin's inequality,
Hessian equations.}

\subjclass{26D05, 26D15}

\maketitle

\section{Introduction}

The $k$-th elementary symmetric function of the variables
$x_1,x_2,\cdots,x_n$ is defined by
$$
\sigma_k(x)=\dsum_{1\leq i_1<\cdots<i_k\leq n}x_{i_1}\cdots x_{i_k},
\quad 1\leq k\leq n,
$$
where $x=(x_1,x_2,\cdots,x_n)$. For example, when $n = 3$,
\begin{align*}
\sigma_1(x)=x_1+x_2+x_3,\quad \sigma_2(x)=x_1x_2+x_1x_3+x_2x_3,\quad
\sigma_n(x)=x_1x_2x_3.
\end{align*}
It will be convenient to define $\sigma_0(x)=1$, and define
$\sigma_k(x)=0$ if $k<0$ or $k>n$. Furthermore, define a $k$-th
elementary symmetric mean as
$$
E_k(x)= \dfrac{\sigma_k(x)}{C_n^k},\quad k=0,1,\cdots,n,
$$
where $C_n^k=\dfrac{n!}{k!(n-k)!}$.

\begin{theorem}\label{th1}
(Newton \cite{Newt} and Maclaurin \cite{Mac}) Let
$x=(x_1,\cdots,x_n)$ be an $n$-tuple non-negative real numbers. Then
\begin{equation}\label{e1.1}
E_k^2(x)\geq E_{k-1}(x)E_{k+1}(x),\quad k=1,2\cdots,n-1,
\end{equation}
\begin{equation}
E_1(x)\geq E_2^{1/2}(x)\geq\cdots\geq E_n^{1/n}(x),
\end{equation}
and the inequality is strict unless all entries of $x$ coincide.
\end{theorem}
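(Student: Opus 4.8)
The plan is to derive both families of inequalities from the real-rootedness of the generating polynomial
\[
p(t)=\prod_{i=1}^{n}(1+x_it)=\dsum_{j=0}^{n}C_n^j\,E_j(x)\,t^j ,
\]
together with two elementary closure properties of the class of real polynomials with only real roots: it is closed under differentiation (by Rolle's theorem, counting multiplicities; Gauss--Lucas pins down where the roots lie) and under the reversal $q(t)\mapsto t^{\deg q}q(1/t)$, which replaces the nonzero roots by their reciprocals. First I would reduce to the case $x_i>0$: replacing $x$ by $x+\varepsilon\mathbf 1$ makes every entry positive, each $E_k$ is continuous in $\varepsilon$, and letting $\varepsilon\to0^+$ recovers the non-strict inequalities for arbitrary $x_i\ge0$; the equality cases are treated separately at the end.

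To prove \eqref{e1.1} for a fixed $k\in\{1,\dots,n-1\}$, assume $x_i>0$, so that $p$ has degree exactly $n$, its roots are real and negative, and every $E_j(x)>0$. Apply to $p$ the chain of operations: differentiate $k-1$ times; reverse the resulting polynomial (its degree is $n-k+1$, and its constant term, a positive multiple of $E_{k-1}(x)$, is nonzero); then differentiate $n-k-1$ more times. Each step sends real-rooted polynomials to real-rooted polynomials and none of them degenerates in degree, so the outcome is a quadratic having only real roots. A direct computation of its coefficients shows that the binomial coefficients and factorials collected along the way cancel exactly, leaving — up to a positive multiplicative constant —
\[
E_{k-1}(x)\,t^2+2E_k(x)\,t+E_{k+1}(x).
\]
That this quadratic has real roots is equivalent to its discriminant being non-negative, which is precisely $E_k^2(x)\ge E_{k-1}(x)E_{k+1}(x)$.

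I would deduce the second chain from the first. For $x_i>0$ all $E_j(x)>0$, and \eqref{e1.1} says exactly that the successive ratios $r_j:=E_j(x)/E_{j-1}(x)$ satisfy $r_1\ge r_2\ge\cdots\ge r_n$. Since $E_0(x)=1$ we have $E_k(x)=r_1r_2\cdots r_k$, so $E_k^{1/k}(x)$ is the geometric mean of $r_1,\dots,r_k$, and the running geometric means of a non-increasing positive sequence are non-increasing (equivalently $r_1\cdots r_k\ge r_{k+1}^{\,k}$, which holds since each of the $k$ factors on the left is $\ge r_{k+1}$). Hence $E_1(x)\ge E_2^{1/2}(x)\ge\cdots\ge E_n^{1/n}(x)$; letting $\varepsilon\to0^+$ again passes to all $x_i\ge0$.

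The part that will need the most care is the equality/strictness analysis. For $x_i>0$ the argument should run cleanly: if the $x_i$ are not all equal then $p$ has at least two distinct roots, and I would show that an equality in \eqref{e1.1} propagates backwards — a double root of the final quadratic forces, by the equality case of Rolle's theorem applied up the chain of differentiations together with the reversal, a root of full multiplicity in $p$, i.e.\ $p=(1+at)^n$, contradicting the two distinct roots; strictness in the Maclaurin chain then follows from strictness in \eqref{e1.1}. The genuinely delicate point is the boundary of the cone $\{x_i\ge0\}$, where the perturbation trick yields only the non-strict inequalities and cannot restore strictness: for instance $x=(0,0,1)$ with $n=3$ gives $E_2^2(x)=E_1(x)E_3(x)=0$ although the entries do not all coincide, so the precise equality characterization there must isolate the degenerate configurations in which sufficiently many entries vanish. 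Making that case distinction rigorous is, I expect, where most of the bookkeeping in a complete proof would go.
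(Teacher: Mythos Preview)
The paper does not give its own proof of Theorem~\ref{th1}; it is quoted as classical, with pointers to Hardy--Littlewood--P\'olya \S4.3 and Rosset for the differential-calculus argument. Your proposal is precisely that argument, and in fact it is the same mechanism the paper later packages as Lemma~\ref{y2.2} (Sylvester) and uses to prove Theorem~\ref{th2}: working with the homogenized polynomial $F(t,s)=\sum(-1)^jC_n^jE_j(x)t^{n-j}s^j$ and taking mixed partials $\partial^{n-2}F/\partial t^{n-1-k}\partial s^{k-1}$ is exactly your differentiate/reverse/differentiate sequence in inhomogeneous coordinates, landing on a real-rooted quadratic whose discriminant gives \eqref{e1.1}. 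Your deduction of the Maclaurin chain from Newton via the monotonicity of the ratios $E_j/E_{j-1}$ is also the standard route.

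Your caution about the equality clause is well placed: the example $x=(0,0,1)$ shows that the strictness statement in the paper is imprecise on the boundary of the nonnegative cone, and the paper does not address this (nor does it need to for its purposes). That bookkeeping is real but orthogonal to the main argument.
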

\par
 The inequalities (\ref{e1.1}) in Theorem \ref{th1} is a consequence of a rule stated by Newton \cite{Newt}
 which gives a lower bound on the number of nonreal roots of a real
 polynomial. Since Newton did not give a proof of his rule, the
 proof of Theorem \ref{th1} is due to MacLaurin \cite{Mac}.
 Actually, the Newton's inequalities (\ref{e1.1}) work for $n$-tuples
 of real, not necessarily positive elements. For an inductive proof in the case where
 $x_1,x_2,\cdots, x_n$ are nonnegative, see \cite{HLP} $\S$ 2.22. For a proof by differential calculus in the case
 where $x_1,x_2,\cdots, x_n$ are real, see \cite{HLP} $\S$ 4.3, or \cite{Ros}.
 Several reformulations/generalisations of Newton's inequalities have been given over the years,
 see for instance \cite{Men,Ros,Nic,EH}.

\par
The Newton-Maclaurin inequalities play important roles in deriving
theoretical result for fully nonlinear partial differential
equations and geometric analysis. There are many important results
that need to use the Newton-Maclaurin inequalities, such as
\cite{gg,GM,GMZ,GRW,GW,HS} etc. This is because the following
$k$-Hessian equations and curvature equations
\begin{align*}
\sigma_k(\la(u_{ij}))&=f(x,u,\nabla u), \\
\sigma_k(\kappa(X))&=f(X,\nu)
\end{align*}
are central studies in the field of fully nonlinear partial
differential equations and geometric analysis. Their left-hand side
$k$-Hessian operator $\sigma_k$ is the primary symmetric function
about the eigenvalues $\lambda=(\la_1,\cdots,\la_n)$ of the Hessian
matrix $(u_{ij})$ or
 the principal curvature
$\kappa=(\kappa_1,\cdots,\kappa_n)$ of the surface. In recent years,
the fully non-linear equations derived from linear combinations of
primary symmetric functions have received increasing attention. For
example, the following special Lagrangian equation
$$
{\rm Im det}(\delta_{ij}+{\rm
i}u_{ij})=\dsum_{k=0}^{[(n-1)/2]}(-1)^k\sigma_{2k+1}(\la(u_{ij}))=0,
$$
which derived by Harvey and Lawson in their study of the minimal
submanifold problem \cite{HL} . The fully non-linear partial
differential equations
$$
P_m(u_{ij})=\dsum_{k=0}^{m-1}(l_k^{+})^{m-k}(x)\sigma_k(u_{ij})=g^{m-1}(x)
$$
studied by Krylov \cite{Kry} and Dong \cite{Dong}. In \cite{LiRW},
Li-Ren-Wang studied the concavity of operators $
\dsum_{s=0}^k\al_s\sigma_s$ and $\sigma_k+\al\sigma_{k-1}$, and
discussed the curvature estimates for the convex surface of the
corresponding equations
$$
\dsum_{s=0}^k\al_s\sigma_s(\kappa(X))=f(X,\nu(X)), \quad X\in M.
$$
Guan and Zhang \cite{GZ} investigated the curvature estimates of the
following curvature equation
$$
\sigma_k(W_u(x))+\alpha(x)\sigma_{k-1}(W_u(x))=\dsum_{l=0}^{k-2}\alpha_{l}(x)\sigma_{l}(W_u(x)),\quad
x\in \mathbb{S}^n.
$$
Recently, Liu and Ren \cite{LR} discussed the Pogorelov-type $C^2$
estimates for $(k-1)$-convex and $k$-convex solutions of the
following Sum Hessian equations
$$
\sigma_{k}(\lambda(u_{ij}))+\al\sigma_{k-1}(\lambda(u_{ij}))=f(x,u,\nabla
u),
$$
and established a rigidity theorem when the right-hand side of the
equation is constant.

\par
In the further study of the above problem, the Newton-Maclaurin type
inequalities for the operators derived from the left-hand side of
the equations are always needed. A natural question is that whether
the Newton-Maclaurin type inequalities for the operators of linear
combinations of these primary symmetric functions still hold?
\par
Now the main result of this paper is stated as follows.
\begin{theorem}\label{th2}
Let $n\geq 3, 1\leq k\leq n-2$. For any real number $\al\in
\mathbb{R}$ and any $n$-tuple real numbers
$x=(x_1,x_2,\cdots,x_n)\in \mathbb{R}^n$, we have
\begin{equation}\label{e1.3}
[\al E_k(x)+E_{k+1}(x)]^2\geq [\al E_{k-1}(x)+E_k(x)][\al
E_{k+1}(x)+E_{k+2}(x)].
\end{equation}
The inequality is strict unless $x_1=\cdots=x_n$, or
$$
\dfrac{\al E_{k}(x)+E_{k+1}(x)}{\al E_{k-1}(x)+E_{k}(x)}=\dfrac{\al
E_{k+1}(x)+E_{k+2}(x)}{\al E_{k}(x)+E_{k+1}(x)}=-\al.
$$
A special case in which the above equality holds is that there are
$n-1$ elements of $x_1, x_2, \cdots, x_n$ taking the value
$-\alpha$.
\par
 Furthermore, if $\al\geq 0$ and
$$
\al E_m(x)+E_{m+1}(x)\geq 0, \quad {\rm for~all~~ } m=0,1,\cdots,k,
$$
then
\begin{equation}\label{e1.4}
[\al+E_{1}(x)]\geq [\al E_{1}(x)+E_2(x)]^{1/2}\geq\cdots\geq [\al
E_{k}(x)+E_{k+1}(x)]^{1/(k+1)}.
\end{equation}

\end{theorem}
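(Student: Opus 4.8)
The plan is to prove the quadratic inequality (\ref{e1.3}) first and then deduce the Maclaurin‑type chain (\ref{e1.4}) from it. Since (\ref{e1.3}) is a closed polynomial inequality in $(x,\al)$ and the case $\al=0$ is exactly Newton's inequality (\ref{e1.1}), I would treat $\al\neq 0$ and, by continuity, $x$ with all coordinates nonzero, and then reduce the index pair $(n,k)$ to the single base case $(3,1)$ using two structure‑preserving operations. \emph{Differentiation}: if $p(t)=\dsum_{j=0}^n\sigma_j(x)t^{n-j}=\prod_{i=1}^n(t+x_i)$, then $\frac1n p'(t)=\prod_{i=1}^{n-1}(t+y_i)$ for real $y_i$ by Rolle's theorem, and from $\sigma_j(y)=\frac{n-j}{n}\sigma_j(x)$ together with $C_{n-1}^j=\frac{n-j}{n}C_n^j$ one gets $E_j(y)=E_j(x)$ for every $j\leq n-1$; hence, whenever $k+2\leq n-1$, inequality (\ref{e1.3}) for $(n,k)$ at $x$ is literally the same numerical inequality as (\ref{e1.3}) for $(n-1,k)$ at $y$. \emph{Reciprocals}: since $E_j(1/x_1,\dots,1/x_n)=E_{n-j}(x)/\sigma_n(x)$, the substitution $z_i=1/x_i$, $\beta=1/\al$ converts (\ref{e1.3}) for $(n,k)$ into (\ref{e1.3}) for $(n,n-1-k)$, because after the substitution the three factors pick up a common positive factor $\al^2\sigma_n(x)^{-2}$ and each $\al E_{j+1}+E_j$ becomes $\al(\beta E_j+E_{j+1})$. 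Combining the two: for $n\geq 4$, if $k\leq n-3$ differentiate down to $(n-1,k)$, while if $k=n-2$ first pass to $(n,1)$ by a reciprocal and then differentiate to $(n-1,1)$; induction on $n$ bottoms out at $(3,1)$.

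For the base case $n=3$, $k=1$, I would set $Q(\al):=[\al E_1+E_2]^2-[\al+E_1][\al E_2+E_3]=\al^2(E_1^2-E_2)+\al(E_1E_2-E_3)+(E_2^2-E_1E_3)$ and complete the square in $u:=\al+E_1$. A short computation gives the clean form $Q(\al)=Au^2-Q'u+A^2$ with $A:=E_1^2-E_2$ and $Q':=2E_1^3-3E_1E_2+E_3$. Now $A=\frac19(\sigma_1^2-3\sigma_2)=\frac1{18}\dsum_{i<j}(x_i-x_j)^2\geq 0$, while $-3A$ and $Q'$ are exactly the coefficients of $t$ and $1$ in the depressed cubic obtained from $p(t)=t^3+3E_1t^2+3E_2t+E_3$ via $t\mapsto t-E_1$. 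Since $p$ has the three real roots $-x_1,-x_2,-x_3$, its discriminant is nonnegative, $-4(-3A)^3-27Q'^2=27(4A^3-Q'^2)\geq 0$, i.e. $Q'^2\leq 4A^3$. Together with $A\geq 0$ this forces $Q(\al)\geq 0$ for all $u$ (when $A=0$ one first deduces $Q'=0$, so $Q\equiv 0$), which is (\ref{e1.3}) for $(3,1)$; the equality cases fall out of "discriminant zero" — a repeated root, i.e. two equal $x_i$, with $\al$ pinned to $u=Q'/(2A)$ — and of $A=0$, i.e. all $x_i$ equal.

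Finally, (\ref{e1.4}) follows from (\ref{e1.3}). Write $c_m:=\al E_m(x)+E_{m+1}(x)$; then (\ref{e1.3}), applied with $k$ replaced by $m$ for $1\leq m\leq k-1$, says exactly that $c_m^2\geq c_{m-1}c_{m+1}$, so $(c_0,\dots,c_k)$ is a log‑concave sequence, nonnegative by hypothesis. One extra inequality at the left end is needed, and here the sign hypotheses enter: $c_0^2-c_1=\al(\al+E_1)+(E_1^2-E_2)=\al c_0+(E_1^2-E_2)\geq 0$, since $\al\geq 0$, $c_0\geq 0$, and $E_1^2\geq E_2$ by Newton. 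Hence the ratios $c_0\geq c_1/c_0\geq c_2/c_1\geq\cdots\geq c_k/c_{k-1}$ form a non‑increasing list of positive numbers (the degenerate situation where some $c_m$ vanishes forces all $x_i$ equal via the equality case of (\ref{e1.3}) or of Newton, and is handled separately), and therefore the geometric mean of the first $m+1$ of them — which is precisely $[\al E_m(x)+E_{m+1}(x)]^{1/(m+1)}$ — is non‑increasing in $m$; this is the chain (\ref{e1.4}).

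The step where a naive approach breaks down, and hence the hard part, is the reduction itself: appending $\al$ as an $(n+1)$‑st variable (so that $\sigma_{k+1}(x_1,\dots,x_n,\al)=\al\sigma_k(x)+\sigma_{k+1}(x)$) does \emph{not} reproduce $\al E_k(x)+E_{k+1}(x)$, because the normalizing binomials $C_n^k$ and $C_n^{k+1}$ differ and only a weighted variant of (\ref{e1.3}) comes out; getting the unweighted inequality is exactly what forces the two‑step differentiation/reciprocal reduction combined with the cubic‑discriminant identity at $(3,1)$. The remaining delicate bookkeeping will be propagating the full strictness/equality statement of (\ref{e1.3}) through both reductions (so the induction carries it) and disposing of the several degenerate configurations in (\ref{e1.4}).
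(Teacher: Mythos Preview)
Your argument is correct, and the route is a close cousin of the paper's rather than a copy. The paper proves the base case $n=3$, $k=1$ by writing $18\bigl([\al E_1+E_2]^2-[\al+E_1][\al E_2+E_3]\bigr)$ as the explicit sum of squares $\sum_{i<j}(z_i+\al)^2(z_j-z_k)^2$ (Lemma~\ref{y2.1}); your completion of the square $Q(\al)=Au^2-Q'u+A^2$ together with the cubic--discriminant bound $Q'^2\le 4A^3$ is an elegant alternative that makes the link to ``three real roots'' transparent. For the reduction, the paper invokes Sylvester's lemma (Lemma~\ref{y2.2}) and differentiates the homogenized polynomial $F(t,s)$ a total of $n-3$ times, $n-2-k$ times in $t$ and $k-1$ times in $s$, landing directly on the real--rooted cubic $E_{k-1}t^3-3E_kt^2s+3E_{k+1}ts^2-E_{k+2}s^3$; your two moves---Rolle in $t$ (which preserves $E_j$ for $j\le n-1$) and the reciprocal involution $z_i=1/x_i$, $\beta=1/\al$ (which swaps $E_j\leftrightarrow E_{n-j}/\sigma_n$)---are exactly the dehomogenized versions of $\partial_t$ and $\partial_s$, so the two reductions are the same mechanism in different coordinates. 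What the paper's packaging buys is uniformity (one lemma, no genericity assumption $\al\neq 0$, $x_i\neq 0$ needed even temporarily) and a slightly cleaner transfer of the equality case via the multiplicity clause of Sylvester's lemma; what yours buys is that it avoids the bivariate lemma entirely and stays within one--variable calculus. For \eqref{e1.4} the paper does the same base inequality $c_0^2\ge c_1$ and then a short induction, canceling a power of $c_{k-1}$; your log--concavity/geometric--mean phrasing is equivalent. One small correction: in your degenerate clause for \eqref{e1.4}, $c_m=0$ does \emph{not} by itself force $x_1=\cdots=x_n$; the right observation is that $c_m=0$ together with $c_{m-1}\ge 0$, $c_{m+1}\ge 0$ and $c_m^2\ge c_{m-1}c_{m+1}$ propagates zeros upward, so $c_m=\cdots=c_k=0$ and the remaining inequalities in the chain are $0\ge 0$.
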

\par
Theorem \ref{th2} is the Newton-Maclaurin type inequality for the
function obtained by linear combination of two neighboring terms
$E_k$ and $E_{k+1}$. Obviously, it is the classical Newton-Maclaurin
inequality when $\al=0$. Furthermore, one may ask if the
Newton-Maclaurin type inequality still holds for the functions which
obtained by linear combination of general primary symmetric
functions. That is, for $\al=(\al_1,\cdots,\al_n)\in\mathbb{R}^n$,
$x=(x_1,\cdots,x_{n+1})\in \mathbb{R}^{n+1}$, whether the following
inequalities hold?
\begin{equation}\label{e1.5}
\left(\dsum_{k=1}^n\al_kE_k(x)\right)^2\geq
\left(\dsum_{k=1}^n\al_kE_{k-1}(x)\right)\left(\dsum_{k=1}^n\al_kE_{k+1}(x)\right).
\end{equation}
\par
The answer is no in general. For example, let $n=3$, $\al=(1,0,1)$,
$x=(4,4,\dfrac{1}{4},\dfrac{1}{4})$, we have
\begin{align*}
E_1(x)=&\dfrac{1}{4}(4+4+\dfrac{1}{4}+\dfrac{1}{4})=\dfrac{17}{8},\\
E_2(x)=&\dfrac{1}{6}(4\times 4+4\times 4\times\dfrac{1}{4}+\dfrac{1}{4}\times\dfrac{1}{4})=\dfrac{107}{32},\\
E_3(x)=&\dfrac{1}{4}(2\times 4\times 4\times\dfrac{1}{4}+2\times 4\times\dfrac{1}{4}\times\dfrac{1}{4})=\dfrac{17}{8},\\
E_4(x)=&4\times 4 \times\dfrac{1}{4}\times\dfrac{1}{4}=1,
\end{align*}
therefore,
\begin{align*}
[E_1(x)+E_3(x)]^2-[1+E_2(x)][E_2(x)+E_4(x)]=-\dfrac{825}{1024}<0.
\end{align*}
Maybe inequalities \eqref{e1.5} hold when $\al=(\al_1,\cdots,\al_n)$
satisfy some structural conditions.

\par
A straightforward conclusion of Newton's inequality \eqref{e1.1} is
\begin{equation}\label{e1.6}
\sigma_k^2(x)-\sigma_{k-1}(x)\sigma_{k+1}(x)\geq\theta
\sigma_k^2(x),
\end{equation}
where $0<\theta<1$ is a constant depending on $n$ and $k$. The
inequality \eqref{e1.6} is also a common sense which is widely used
in the study of $k$-Hessian equations and curvature equations, see
\cite{GM,GMZ,GRW,GW,HS,LiRW,LRW,LT}, etc. For the Sum Hessian
operators $\sigma_{k}+\al\sigma_{k-1}$, if $\al>0$ and
$x=(x_1,\cdots,x_n)\in\Gamma_k$, Liu and Ren \cite{LR} proved
$$
[\sigma_{k}(x)+\al\sigma_{k-1}(x)]^2\geq[\sigma_{k-1}(x)+\al\sigma_{k-2}(x)][\sigma_{k+1}(x)+\al\sigma_{k}(x)]
$$
by Newton's inequality \eqref{e1.6}. Here $\Gamma_k$ is the
Garding's cone
\[\Gamma_k=\{\lambda \in \mathbb R^n \ | \quad \sigma_m(\lambda)>0, \quad  m=1,\cdots,k\}.\]

\par
The requirement $x\in\Gamma_k$ of above inequalities restrict their
applications in the study of Sum Hessian equations. In this paper,
we remove the restrictions from $\al$ and $x$.

\begin{theorem}\label{th3}
Let $n\geq 3, 0\leq k\leq n-1$. For any real number $\al\in
\mathbb{R}$ and any $n$-tuple real numbers
$x=(x_1,x_2,\cdots,x_n)\in \mathbb{R}^n$, we have
\begin{equation}\label{e1.7}
(1-\theta)[\al\sigma_{k}(x)+\sigma_{k+1}(x)]^2-[\al\sigma_{k-1}(x)+\sigma_{k}(x)][\al\sigma_{k+1}(x)+\sigma_{k+2}(x)]\geq
0,
\end{equation}
where $0<\theta<1$ is a constant depending only on $n$ and $k$.
\end{theorem}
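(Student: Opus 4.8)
The plan is to deduce Theorem \ref{th3} from Theorem \ref{th2}, using the already-granted inequality \eqref{e1.3} together with a quantitative strengthening of the classical Newton inequality in the form \eqref{e1.6}. Rewriting \eqref{e1.3} in terms of the unnormalized $\sigma_j$ introduces binomial factors: since $E_j = \sigma_j/C_n^j$, inequality \eqref{e1.3} becomes, after clearing denominators,
\begin{equation*}
\left(\al c_k \sigma_k + c_{k+1}\sigma_{k+1}\right)^2 \ge \left(\al d_{k-1}\sigma_{k-1} + d_k\sigma_k\right)\left(\al e_{k+1}\sigma_{k+1} + e_{k+2}\sigma_{k+2}\right)
\end{equation*}
for suitable positive rational constants; so the first task is bookkeeping — track how $C_n^{k-1}, C_n^k, C_n^{k+1}, C_n^{k+2}$ enter and confirm that \eqref{e1.3} gives a clean comparison after homogenizing. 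I expect this to produce exactly \eqref{e1.7} with $\theta = 0$ up to harmless constant rescalings, OR to fall slightly short, in which case the correction is supplied by the genuinely strict room in Newton's inequality.

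The core of the argument is to produce the \emph{gap} $\theta > 0$. The idea is that \eqref{e1.3} is an equality only in the degenerate cases described in Theorem \ref{th2} (all $x_i$ equal, or the ratio condition forcing $n-1$ entries equal to $-\al$), and away from those the slack is controlled below by a positive multiple of $[\al\sigma_k + \sigma_{k+1}]^2$. To make this rigorous without compactness hand-waving, I would expand the difference
\begin{equation*}
[\al\sigma_k(x)+\sigma_{k+1}(x)]^2 - [\al\sigma_{k-1}(x)+\sigma_k(x)][\al\sigma_{k+1}(x)+\sigma_{k+2}(x)]
\end{equation*}
as a quadratic polynomial in $\al$: the $\al^2$ coefficient is $\sigma_k^2 - \sigma_{k-1}\sigma_{k+1}$, the $\al^0$ coefficient is $\sigma_{k+1}^2 - \sigma_k\sigma_{k+2}$, and the $\al^1$ coefficient is $2\sigma_k\sigma_{k+1} - \sigma_{k-1}\sigma_{k+2} - \sigma_k\sigma_{k+1} = \sigma_k\sigma_{k+1} - \sigma_{k-1}\sigma_{k+2}$. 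Each of the three coefficients is a nonnegative-on-the-relevant-domain "Newton-type" form; applying \eqref{e1.6} to the first and last (and an analogous bound, obtainable from \eqref{e1.1}/\eqref{e1.3} with $\al=0$, to the middle, or better, a Cauchy–Schwarz-type bound $\sigma_k\sigma_{k+1} - \sigma_{k-1}\sigma_{k+2} \ge $ a positive combination) shows the whole quadratic in $\al$ dominates $\theta$ times $(\al^2\sigma_k^2 + \al\sigma_k\sigma_{k+1} + \sigma_{k+1}^2)$, which in turn dominates a fixed positive multiple of $[\al\sigma_k + \sigma_{k+1}]^2 = \al^2\sigma_k^2 + 2\al\sigma_k\sigma_{k+1} + \sigma_{k+1}^2$. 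Choosing $\theta$ as the minimum of the constants produced by these finitely many applications of \eqref{e1.6}, which depend only on $n$ and $k$, finishes the proof.

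The main obstacle is the middle ($\al^1$) coefficient $\sigma_k\sigma_{k+1} - \sigma_{k-1}\sigma_{k+2}$ and the sign of $\al$: when $\al < 0$, the cross term $2\al\sigma_k\sigma_{k+1}$ in the target $[\al\sigma_k+\sigma_{k+1}]^2$ is negative while the cross term available from the expansion is $\al(\sigma_k\sigma_{k+1}-\sigma_{k-1}\sigma_{k+2})$, and one must check the quadratic-form inequality $A\al^2 + B\al + C \ge \theta(a\al^2 + b\al + c)$ holds for all real $\al$ simultaneously — this reduces to a discriminant/positive-semidefiniteness condition relating $A - \theta a$, $C - \theta c$, and $B - \theta b$, and it is here that one genuinely needs that the slack in \eqref{e1.6} is \emph{uniform}. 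The cleanest route is: (i) show $\sigma_k\sigma_{k+1} - \sigma_{k-1}\sigma_{k+2} \ge 0$ whenever all $\sigma_j \ge 0$ for $j \le k+2$ but more usefully bound $(\sigma_k\sigma_{k+1}-\sigma_{k-1}\sigma_{k+2})^2 \le (1-\theta')(\sigma_k^2-\sigma_{k-1}\sigma_{k+1})(\sigma_{k+1}^2-\sigma_k\sigma_{k+2}) \cdot (\text{const})$ via the same Newton machinery, i.e. a "strict Cauchy–Schwarz" for the Newton sequence; then (ii) feed this and \eqref{e1.6} into the $2\times 2$ matrix positivity criterion. If step (i) proves awkward for all real $x$, the fallback is to prove \eqref{e1.7} first on the cone $\Gamma_{k+2}$ (or wherever the $E_j$ have controlled signs) by the above, and then handle the remaining sign regimes of $x$ by a direct but elementary case analysis, since outside the cone some $\sigma_j$ vanishes or is negative and the quadratic in $\al$ is easier to bound termwise.
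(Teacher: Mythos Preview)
Your plan has a genuine gap at exactly the point you flag as ``the main obstacle.'' Writing the left side of \eqref{e1.7} as the quadratic $Q(\al)=A\al^2+B\al+C$ with $A=\sigma_k^2-\sigma_{k-1}\sigma_{k+1}$, $B=\sigma_k\sigma_{k+1}-\sigma_{k-1}\sigma_{k+2}$, $C=\sigma_{k+1}^2-\sigma_k\sigma_{k+2}$ is fine, and \eqref{e1.6} does give $A\ge\theta_0\sigma_k^2$ and $C\ge\theta_0'\sigma_{k+1}^2$. But to get \eqref{e1.7} you need the full discriminant condition $(B-2\theta\sigma_k\sigma_{k+1})^2\le 4(A-\theta\sigma_k^2)(C-\theta\sigma_{k+1}^2)$ for all real $x$, and this does \emph{not} follow from the two diagonal bounds alone. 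Your hope that \eqref{e1.3} ``gives a clean comparison after homogenizing'' is too optimistic: converting $E_j$ to $\sigma_j$ inserts four distinct binomial coefficients $C_n^{k-1},C_n^k,C_n^{k+1},C_n^{k+2}$, and no substitution $\al\mapsto\al'$ turns the $E$-inequality into the $\sigma$-inequality, because the three bracketed factors acquire incompatible scalings. In particular, even the $\theta=0$ case of \eqref{e1.7} (equivalently $B^2\le 4AC$) is not a direct consequence of \eqref{e1.3}. The ``strict Cauchy--Schwarz for the Newton sequence'' you propose is precisely the missing ingredient, but you neither state it precisely nor prove it, and the fallback (``case analysis outside $\Gamma_{k+2}$'') is not carried out; for general real $x$ the signs of $\sigma_{k-1},\sigma_k,\sigma_{k+1},\sigma_{k+2}$ are unconstrained, so no soft sign argument will close the gap.

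For comparison, the paper does not stay in $\mathbb{R}^n$ at all. After disposing of the boundary cases $k=0$, $k=n-1$ (and $n=3,k=1$) with $\theta=\tfrac12$ by hand, it uses Sylvester's lemma (Lemma~\ref{y2.2}) exactly as in the proof of \eqref{e1.3} to reduce the general case $n\ge4$, $1\le k\le n-2$ to a statement about a real-rooted cubic, i.e.\ to $z\in\mathbb{R}^3$ with the binomial coefficients $a=C_n^{k-1},b=C_n^k,c=C_n^{k+1},d=C_n^{k+2}$ carried along as weights. The heart of the argument is then an explicit decomposition
\[
L(z)=\theta_1(\al b\sigma_1+c\sigma_2)^2+\theta_2 W(z,t)+V(z),
\]
where $W(z,t)=\sum (z_i-z_j)^2(\al+tz_\ell)^2$ is a visible sum of squares, the constants $\theta_1,\theta_2,t$ are determined by matching the coefficients of $\sigma_1\sigma_2$, $\sigma_1\sigma_3$, $\sigma_2$, and the remainder $V(z)$ is shown nonnegative via AM--GM after checking (Lemmas~\ref{yl1}--\ref{yl2}) several polynomial inequalities in $n,k$. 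This is exactly the kind of structural input your outline is missing: the cross terms are killed not by a discriminant estimate but by an algebraic identity with a manufactured sum-of-squares witness.
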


\begin{remark}
When $k=0$ or $k=n-1$, \eqref{e1.3} does not hold, one can take
$x_1=x_2=\cdots=x_n$ for verification. It is a difference between
\eqref{e1.7} and \eqref{e1.3}.
\end{remark}

\par
\section{The proof of Theorem \ref{th2}}

\par
First, we consider the following inequality in Euclidean space
$\mathbb{R}^3$.
\begin{lemma}\label{y2.1}
For any real number $\al\in \mathbb{R}$ and $z=(z_1,z_2,z_3)\in
\mathbb{R}^3$, we have
\begin{equation}
[\al E_1(z)+E_{2}(z)]^2\geq [\al E_0(z) +E_1(z)][\al
E_{2}(z)+E_{3}(z)].
\end{equation}
The inequality is strict unless $z_1=z_2=z_3$ or
$$
\dfrac{\al E_1(z)+E_{2}(z)}{\al E_0(z) +E_1(z)}=\dfrac{\al
E_2(z)+E_{3}(z)}{\al E_1(z) +E_2(z)}=-\al.
$$
\end{lemma}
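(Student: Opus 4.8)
The plan is to reduce everything to an identity plus the classical Newton inequality in $\mathbb{R}^3$. Write $e_j = E_j(z)$, so $e_0=1$, $e_1 = \tfrac13\sigma_1$, $e_2 = \tfrac13\sigma_2$, $e_3 = \sigma_3$. Set $A = \al e_0 + e_1$, $B = \al e_1 + e_2$, $C = \al e_2 + e_3$; the claim is $B^2 \ge AC$. Expanding,
\begin{equation*}
B^2 - AC = \al^2(e_1^2 - e_0 e_2) + \al(2e_1 e_2 - e_0 e_3 - e_1 e_2) + (e_2^2 - e_1 e_3) = \al^2(e_1^2 - e_0 e_2) + \al(e_1 e_2 - e_0 e_3) + (e_2^2 - e_1 e_3).
\end{equation*}
By Theorem \ref{th1} (Newton's inequality, valid for real $n$-tuples in this range), the coefficient of $\al^2$ is $e_1^2 - e_2 \ge 0$ and the constant term is $e_2^2 - e_1 e_3 \ge 0$. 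So it suffices to show this quadratic in $\al$ is nonnegative; since the leading coefficient is $\ge 0$, it is enough to check the discriminant is $\le 0$, i.e.
\begin{equation*}
(e_1 e_2 - e_0 e_3)^2 \le 4(e_1^2 - e_0 e_2)(e_2^2 - e_1 e_3).
\end{equation*}

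The main obstacle is establishing this discriminant inequality. I would prove it by passing to the symmetric functions themselves and, by homogeneity, normalizing. The cleanest route is probably a direct computation: write everything in terms of $p=\sigma_1, q=\sigma_2, r=\sigma_3$ of three real variables, clear denominators, and reduce to a polynomial inequality in $z_1,z_2,z_3$ that must hold for all real triples. One expects the difference $4(e_1^2-e_0e_2)(e_2^2-e_1e_3) - (e_1e_2-e_0e_3)^2$ to factor, after multiplying by a suitable power of $3$, as a sum of squares or as a nonnegative product of differences $(z_i - z_j)^2$. Indeed, since $e_1^2 - e_2$ and $e_2^2 - e_1 e_3$ each vanish precisely when $z_1=z_2=z_3$ and each is itself (up to a positive constant) a sum of squares of the $z_i - z_j$, I anticipate the discriminant expression is divisible by $\sum_{i<j}(z_i-z_j)^2$ or by its square, with a manifestly nonnegative cofactor. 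Verifying that factorization — and that the cofactor is nonnegative for all real $z$ — is the computational heart of the argument.

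For the equality case, I would trace back through the chain: $B^2 = AC$ forces the discriminant inequality to be an equality and forces $\al$ to be the double root $\al = -(e_1e_2 - e_0e_3)/\bigl(2(e_1^2 - e_0e_2)\bigr)$ (when $e_1^2 - e_2 \ne 0$), which after simplification gives $B/A = C/B = -\al$; the degenerate case $e_1^2 = e_2$ is exactly $z_1=z_2=z_3$ by the strictness clause of Theorem \ref{th1}, and there $A,B,C$ are all equal so equality holds trivially. To see that the configuration with two of the $z_i$ equal to $-\al$ realizes equality: if $z_2 = z_3 = -\al$, then $\al e_{m-1}(z) + e_m(z)$ has $z_1 + \al$ as a factor for the relevant $m$ (since $-\al$ is then a root of the associated polynomial in a suitable sense), making the ratios constant. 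Finally I would record the strict inequality otherwise. I expect the identity $B^2 - AC = \al^2(e_1^2-e_2) + \al(e_1e_2-e_0e_3) + (e_2^2-e_1e_3)$ and the discriminant reduction to be routine; the factorization of the discriminant difference into a visibly nonnegative form is the step I expect to require real work.
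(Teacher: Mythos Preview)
Your setup is correct and, in fact, is the same starting point as the paper's proof: expanding $B^2-AC$ as a quadratic in $\al$,
\[
B^2-AC=\al^2(e_1^2-e_2)+\al(e_1e_2-e_3)+(e_2^2-e_1e_3).
\]
Where you diverge is in how you handle this quadratic. You propose to show nonnegativity via the discriminant inequality
\[
(e_1e_2-e_3)^2\le 4(e_1^2-e_2)(e_2^2-e_1e_3),
\]
and you leave this as ``the step I expect to require real work,'' with only a speculation that it factors nicely. That is the gap: you have reduced the lemma to another inequality of comparable difficulty and stopped. For the record, your discriminant inequality is true and can be finished cleanly: using $\sigma_1^2-3\sigma_2=\tfrac12\sum(z_i-z_j)^2$, $\sigma_2^2-3\sigma_1\sigma_3=\tfrac12\sum z_i^2(z_j-z_k)^2$, and $\sigma_1\sigma_2-9\sigma_3=\sum z_i(z_j-z_k)^2$, it becomes Cauchy--Schwarz with weights $(z_j-z_k)^2$.

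The paper's proof is shorter because it \emph{completes the square} in $\al$ rather than analyzing the discriminant. With the same three identities, your quadratic is exactly
\[
18(B^2-AC)=\sum_{\{i,j,k\}=\{1,2,3\}} (z_j-z_k)^2(z_i+\al)^2,
\]
which is the paper's one-line sum-of-squares identity. So your route works once completed, but it is a detour: the discriminant condition you isolate is precisely the Cauchy--Schwarz inequality whose equality form is the paper's completion of the square.

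On the equality case: your argument that ``$\al$ is the double root, which after simplification gives $B/A=C/B=-\al$'' is asserted, not shown, and doing it via the formula $\al=-(e_1e_2-e_3)/\bigl(2(e_1^2-e_2)\bigr)$ is unpleasant. From the sum-of-squares form it is immediate: equality forces each term $(z_j-z_k)^2(z_i+\al)^2$ to vanish, hence either $z_1=z_2=z_3$ or (up to relabeling) $z_1=z_2=-\al$, and in the latter case one checks directly that $A,B,C$ are in ratio $1:(-\al):\al^2$.
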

\begin{proof}
A straightforward calculation shows
\begin{align*}
&18[\al E_1(z)+E_{2}(z)]^2-18[\al E_0(z) +E_1(z)][\al E_{2}(z)+E_{3}(z)]\\
=&[(z_1+\al)(z_2+\al)-(z_1+\al)(z_3 + \al)]^2+[(z_1 + \al)(z_2 +
\al) - (z_2
+\al)(z_3 + \al)]^2\\
& + [(z_1 + \al)(z_3 + \al) - (z_2 + \al)(z_3 +\al)]^2\geq 0.
\end{align*}
Obviously, the inequality is strict unless $z_1=z_2=z_3$ or any two
elements of $z_1,z_2,z_3$ valued $-\al$. Without loss of generality,
we assume $z_1=z_2=-\al$. Then we obtain
\begin{align*}
\al E_0(z) +E_1(z)=\dfrac{z_3+\al}{3}, \al
E_1(z)+E_{2}(z)=\dfrac{-\al(z_3+\al)}{3},  \al
E_2(z)+E_{3}(z)=\dfrac{\al^2(z_3+\al)}{3}.
\end{align*}
Thus,
$$
\dfrac{\al E_1(z)+E_{2}(z)}{\al E_0(z) +E_1(z)}=\dfrac{\al
E_2(z)+E_{3}(z)}{\al E_1(z) +E_2(z)}=-\al.
$$
\end{proof}

\par
The following Lemma is a useful tool to prove Newton's inequalities
\eqref{e1.1} from  Sylvester \cite{Syl,Nic}.

\begin{lemma}\label{y2.2}
If
\begin{equation*}
F(x,y)=c_0x^m+c_1x^{m-1}y+\cdots+c_m y^m
\end{equation*}
is a homogeneous function of the $n$-th degree in $x$ and $y$ which
has all its roots $x/y$ real, then the same is true for all
non-identical 0 equations $\dfrac{\partial^{i+j}F}{\partial
x^j\partial y_j}=0$, obtained from it by partial differentiation
with respect to $x$ and $y$. Further, if $E$ is one of these
equations, and it has a multiple root $\al$, then $\al$ is also a
root, of multiplicity one higher, of the equation from which $E$ is
derived by differentiation.
\end{lemma}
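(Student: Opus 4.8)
Since Lemma~\ref{y2.2} is Sylvester's classical statement, the plan is to reduce it to the one-variable situation and then invoke Rolle's theorem together with a multiplicity count. It suffices to handle a single partial derivative: differentiation in $y$ is symmetric to differentiation in $x$, and any non-identically-zero mixed partial $\partial^{i+j}F/\partial x^{i}\partial y^{j}$ is reached from $F$ by finitely many one-step derivatives, so the general assertion follows by induction on $i+j$. The hypothesis ``non-identical $0$'' is exactly what keeps this induction alive; it excludes degeneracies such as $F=x^{m}$, whose $y$-derivative vanishes.

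First I would dehomogenize. Write $F(x,y)=y^{\mu}G(x,y)$ with $\mu$ maximal, so that $G(1,0)\neq 0$; thus $[1:0]$ is a root of $F$ of order exactly $\mu$, while $g(x):=G(x,1)$ is a polynomial of degree $m-\mu$ whose roots are precisely the finite roots $x_{i}/y_{i}$ of $F$, all real by hypothesis. From $F=y^{\mu}G$ one gets $\partial F/\partial x=y^{\mu}\,\partial G/\partial x$, and $(\partial G/\partial x)(x,1)=g'(x)$; moreover $\partial G/\partial x$ does not vanish at $[1:0]$ as long as $g$ is non-constant, so $\partial F/\partial x$ has a root of order exactly $\mu$ ``at infinity'' together with the finite roots of $g'$. (If $g$ is constant then $\partial F/\partial x\equiv 0$, the excluded case.) Hence everything reduces to the claim that if $g$ is a real polynomial of degree $d$ with $d$ real roots counted with multiplicity, then $g'$ has $d-1$ real roots counted with multiplicity.

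This is the standard Rolle count. Let $r_{1}<\cdots<r_{\ell}$ be the distinct roots of $g$, with multiplicities $a_{1},\dots,a_{\ell}$ and $\sum_{i}a_{i}=d$. Then $g'$ vanishes at each $r_{i}$ to order $a_{i}-1$, which already gives $\sum_{i}(a_{i}-1)=d-\ell$ roots, and by Rolle $g'$ has at least one root in each of the $\ell-1$ open intervals $(r_{i},r_{i+1})$. Altogether $g'$ has at least $(d-\ell)+(\ell-1)=d-1=\deg g'$ real roots, hence exactly that many and all real; and since the count is tight, each interval $(r_{i},r_{i+1})$ contains exactly one root of $g'$, which is simple. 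Combined with the computation at infinity this proves the first part of the lemma, and then the general case follows by the induction above.

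The ``Further'' clause is the same observation read backwards. Let $E$ correspond (after dehomogenization) to $g'$ and its parent $\tilde E$ to $g$. A multiple root $\alpha$ of $E$ cannot be one of the simple roots of $g'$ lying strictly between consecutive roots of $g$; therefore $\alpha$ is a root of $g$, of some order $\nu\geq 2$, and hence a root of $g'$ of order $\nu-1$. Thus the multiplicity of $\alpha$ in $\tilde E$ is one higher than its multiplicity in $E$; the corresponding statements at the roots $[1:0]$ and $[0:1]$ are the explicit $\mu$-counting above. The one place that really needs care is exactly that bookkeeping at the roots ``at infinity'': one must keep the multiplicities of the homogeneous form and of its dehomogenization synchronized, and treat $\partial/\partial x$ (root $[1:0]$) and $\partial/\partial y$ (root $[0:1]$) separately. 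Beyond that, the argument is Rolle's theorem plus counting.
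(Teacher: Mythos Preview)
The paper does not actually prove Lemma~\ref{y2.2}; it simply quotes the statement from Sylvester and Niculescu and then proceeds directly to the proof of Theorem~\ref{th2}. So there is no ``paper's proof'' to compare against. Your argument---dehomogenize, apply Rolle's theorem to count the real roots of $g'$, and observe that the count is tight so the interval roots are simple---is precisely the classical proof and is correct for the first assertion of the lemma, which is the only part the paper ever uses.

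One small point on the ``Further'' clause: your remark that ``the corresponding statements at the roots $[1:0]$ and $[0:1]$ are the explicit $\mu$-counting above'' glosses over the fact that differentiating in $x$ \emph{preserves} rather than raises the multiplicity of $[1:0]$ (for instance $F=y^{2}(x-y)$ gives $\partial F/\partial x=y^{2}$, with $[1:0]$ a double root in both). So the one-higher-multiplicity statement, read literally on $\mathbb{RP}^{1}$, holds for finite $\al$ and for the infinite root opposite to the variable of differentiation, but not for the other one. This is an imprecision already present in the classical formulation the paper quotes, and since the paper never invokes the ``Further'' clause, it has no bearing on anything downstream; still, it would be cleaner to say explicitly that your multiplicity argument is for finite $\al$.
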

Let us now present the proof of Theorem \ref{th2}.
\par
\begin{proof}
\par
We assume $P(t)$ is a polynomial of $n$-degree, with real roots
$x_1,x_2,\cdots x_n$. Then $P(t)$ is represented as
$$
P(t)=\prod_{i=1}^n(t-x_i)=E_0(x)t^n-C_n^1E_1(x)t^{n-1}+C_n^2E_2(x)t^{n-2}-\cdots+(-1)^nE_n(x),
$$
and we shall apply Lemma \ref{y2.2} to the associated homogeneous
polynomial
$$
F(t,s)=E_0(x)t^n-C_n^1E_1(x)t^{n-1}s+C_n^2E_2(x)t^{n-2}s^2-\cdots+(-1)^nE_n(x)s^n.
$$
Considering the case of the
derivatives$\dfrac{\partial^{n-3}F}{\partial t^{n-2-k}\partial
s^{k-1}}$(for $k=1,\cdots,n-2$), we arrive to the fact that all the
cubic polynomials
\begin{equation}\label{e2.2}
E_{k-1}(x)t^3-3E_k(x)t^2s+3E_{k+1}(x)ts^2-E_{k+2}(x)s^3
\end{equation}
 for
$k=1,\cdots,n-2$ also have real roots.
\par
If $E_{k-1}(x)=E_{k+2}(x)=0$, it is easy to get
\begin{align*}
&[\al E_k(x)+E_{k+1}(x)]^2- [\al E_{k-1}(x)+E_k(x)][\al
E_{k+1}(x)+E_{k+2}(x)]\\
=&\dfrac{1}{2}[\al
E_k(x)+E_{k+1}(x)]^2+\dfrac{\al^2}{2}E_k^2(x)+\dfrac{1}{2}E_{k+1}^2(x)\geq
0.
\end{align*}
So we divide it in two cases to deal with (\ref{e1.3}):
$E_{k-1}(x)\neq 0$ or $E_{k+2}(x)\neq 0$. \vskip 3mm
\par
Case A:  When $E_{k-1}(x)\neq 0$, by \eqref{e2.2}, the polynomial
$$t^3-\dfrac{3E_k(x)}{E_{k-1}(x)}t^2+\dfrac{3E_{k+1}(x)}{E_{k-1}(x)}t-\dfrac{E_{k+2}(x)}{E_{k-1}(x)}$$
has three real roots. We denote the roots by $z_1,z_2,z_3$ and
denote $z=(z_1,z_2,z_3)$, then
$$
E_1(z)=\dfrac{E_{k}(x)}{E_{k-1}(x)},\quad
E_2(z)=\dfrac{E_{k+1}(x)}{E_{k-1}(x)},\quad
E_3(z)=\dfrac{E_{k+2}(x)}{E_{k-1}(x)}. $$ Using Lemma \ref{y2.1} we
obtain (\ref{e1.3}). \vskip 3mm
\par
Case B: When $E_{k+2}(x)\neq 0$, by \eqref{e2.2}, the polynomial
$$s^3-\dfrac{3E_{k+1}(x)}{E_{k+2}(x)}s^2+\dfrac{3E_{k}(x)}{E_{k+2}(x)}s-\dfrac{E_{k-1}(x)}{E_{k+2}(x)}$$
has three real roots. Then the proof of (\ref{e1.3}) is similar to
Case A.
\par
By Lemma \ref{y2.1}, the inequalities of \eqref{e1.3} are strict
unless $x_1=x_2=\cdots=x_n$ or
$$
\dfrac{\al E_{k}(x)+E_{k+1}(x)}{\al E_{k-1}(x)+E_{k}(x)}=\dfrac{\al
E_{k+1}(x)+E_{k+2}(x)}{\al E_{k}(x)+E_{k+1}(x)}=-\al.
$$
A special case in which the above equality holds is that there are
$n-1$ elements of $x_1, x_2, \cdots, x_n$ valued $-\alpha$.

\par
Now, we prove (\ref{e1.4}). When $k=0$, we get from \eqref{e1.1},
\begin{align*}
[\al+E_{1}(x)]^2-[\al E_{1}(x)+E_2(x)]=&\al^2+\al
E_{1}(x)+E_{1}^2(x)-E_{2}(x)\\
\geq&\al(\al+E_{1}(x))\geq0.
\end{align*}
Therefore,
$$[\al+E_{1}(x)]\geq [\al E_{1}(x)+E_2(x)]^{1/2}.$$
\par
For $1\leq k\leq n-2$, we assume
$$[\al
E_{k-2}(x)+E_{k-1}(x)]^{1/(k-1)}\geq [\al
E_{k-1}(x)+E_{k}(x)]^{1/k}.$$ Combining (\ref{e1.3}) and the
inequalities above, we get
\begin{align*}
[\al E_{k-1}(x)+E_{k}(x)]^2\geq &[\al E_{k-2}(x)+E_{k-1}(x)][\al
E_{k}(x)+E_{k+1}(x)]\\
\geq &[\al E_{k-1}(x)+E_{k}(x)]^{(k-1)/k}[\al E_{k}(x)+E_{k+1}(x)].
\end{align*}
Thus,
$$
[\al E_{k-1}(x)+E_{k}(x)]^{1/k}\geq [\al
E_{k}(x)+E_{k+1}(x)]^{1/(k+1)}
$$
by canceling the common factor.

\end{proof}

\section{The proof of Theorem \ref{th3}}

\par
We divide it in two cases to prove Theorem \ref{th3}:
\par
(i) Special cases: $k=0$, $k=n-1$ or $n=3, k=1$;
\par
(ii) General cases: $n\geq 4, 1\leq k\leq n-2$.

\subsection{Special cases}
~
\par
$\bullet$ When $k=0$, choose $\theta=\dfrac{1}{2}$ in \eqref{e1.7}.
Using the identity $\sigma_1^2(x)=\dsum_{i=1}^nx_i^2+2\sigma_2$, we
have
\begin{align*}
\dfrac{1}{2}[\al+\sigma_{1}(x)]^2-[\al\sigma_{1}(x)+\sigma_{2}(x)]
=&\dfrac{\al^2}{2}+\dfrac{1}{2}\sigma_1^2(x)-\sigma_2(x)\geq 0.
\end{align*}
\par
$\bullet$ When $k=n-1$, choose $\theta=\dfrac{1}{2}$. By using the
identity
$$
\sigma_{n-1}^2(x)=\dsum_{i=1}^n\dfrac{\sigma_n^2(x)}{x_i^2}+2\sigma_{n-2}(x)\sigma_n(x),
$$
we get
\begin{align*}
&\dfrac{1}{2}[\al\sigma_{n-1}(x)+\sigma_{n}(x)]^2-[\al\sigma_{n-2}(x)+\sigma_{n-1}(x)]\al\sigma_n(x)\\
=&\dfrac{\al^2}{2}\sigma_{n-1}^2(x)+\dfrac{1}{2}\sigma_{n}^2(x)-\al^2\sigma_{n-2}(x)\sigma_n(x)
\geq 0.
\end{align*}
\par
$\bullet$ When $n=3, k=1$, choose $\theta=\dfrac{1}{2}$. Using the
identities
$$
\sigma_1(x)^2=x_1^2+x_2^2+x_3^2+2\sigma_2(x),\quad
\sigma_2^2(x)=x_1^2x_2^2+x_1^2x_3^2+x_2^2x_3^2+2\sigma_1(x)\sigma_3(x),
$$
we obtain
\begin{align*}
&\dfrac{1}{2}[\al\sigma_{1}(x)+\sigma_{2}(x)]^2-[\al+\sigma_{1}(x)][\al\sigma_2(x)+\sigma_3(x)]\\
=&\dfrac{\al^2}{2}\sigma_{1}^2(x)+\dfrac{1}{2}\sigma_{2}^2(x)-\al^2\sigma_2(x)-\al\sigma_3(x)-\sigma_{1}(x)\sigma_3(x)\\
\geq&\dfrac{\al^2}{2}x_1^2+\dfrac{1}{2}x_2^2x_3^2-\al x_1x_2x_3 \geq
0.
\end{align*}

\subsection{General cases}

For $n\geq 4, 1\leq k\leq n-2$, we denote
\begin{equation}\label{e3.1}
a=C_n^{k-1},\quad b=C_n^{k}, \quad c=C_n^{k+1}, \quad d=C_n^{k+2}.
\end{equation}
Then the inequalities \eqref{e1.7} can be rewritten as
\begin{equation*}
(1-\theta)[\al b E_{k}(x)+cE_{k+1}(x)]^2-[\al
aE_{k-1}(x)+bE_{k}(x)][\al cE_{k+1}(x)+dE_{k+2}(x)]\geq 0.
\end{equation*}
From Lemma \ref{y2.2}, and by a similar proof to \eqref{e1.3}, we
shall complete the proof  Theorem 1.3 via establishing the following
inequalities.
\begin{equation*}
(1-\theta)[\al b E_{1}(z)+cE_{2}(z)]^2-[\al a+bE_{1}(z)][\al
cE_{2}(z)+dE_{3}(z)]\geq 0.
\end{equation*}
Here $z=(z_1,z_2,z_3)\in \mathbb{R}^3$. Furthermore, it is
equivalent to prove
\begin{equation}\label{e3.2}
L(z):=(\al b \sigma_1+c\sigma_2)^2-(3\al a+b\sigma_1)(\al
c\sigma_2+3d\sigma_3)\geq \theta(\al b \sigma_1+c\sigma_2)^2,
\end{equation}
where
$$\sigma_1=z_1+z_2+z_3, \quad \sigma_2=z_1z_2+z_1z_3+z_2z_3,\quad
\sigma_3=z_1z_2z_3.$$

\par

Now we prove \eqref{e3.2}. Let us expand the left-hand and
right-hand sides of \eqref{e3.2} separately. By using identities
\begin{align*}
\sigma_1^2=z_1^2+z_2^2+z_3^2+2\sigma_2,\quad
\sigma_2^2=z_1^2z_2^2+z_1^2z_3^2+z_2^2z_3^2+2\sigma_1\sigma_3,
\end{align*}
we have
\begin{align}\label{e3.3}
L(z)=&\al^2 b^2\sigma_1^2+\al bc\sigma_1\sigma_2+c^2\sigma_2^2-3\al^2 ac\sigma_2-9\al ad\sigma_3-3bd\sigma_1\sigma_3\nonumber\\
=&\al^2 b^2\dsum_{i=1}^3z_i^2+\al^2 (2b^2-3ac)\sigma_2+\al bc\sigma_1\sigma_2\nonumber\\
&+c^2\dsum_{p<q}^3z_p^2z_q^2+(2c^2-3bd)\sigma_1\sigma_3-9\al
ad\sigma_3, \\
(\al b \sigma_1+c\sigma_2)^2=&\al^2 b^2\dsum_{i=1}^3z_i^2+2\al^2
b^2\sigma_2+2\al
bc\sigma_1\sigma_2+c^2\dsum_{p<q}^3z_p^2z_q^2+2c^2\sigma_1\sigma_3.\label{e3.4}
\end{align}

\par
For any $z=(z_1,z_2,z_3)\in \mathbb{R}^3$, we define a nonnegative
function
\begin{align*}
W(z,t) =& (z_1 - z_2)^2 (\al +t z_3)^2 + (z_1 - z_3)^2 (\al +tz_2)^2
+ (z_2 - z_3)^2(\al +tz_1)^2.
\end{align*}
Expanding $W(z,t)$ we get
\begin{align}\label{e3.5}
W(z,t)=&2\al^2 \dsum_{i=1}^3z_i^2-2\al^2 \sigma_2+2\al
t\sigma_1\sigma_2+2t^2\dsum_{p<q}^3z_p^2z_q^2
-2t^2\sigma_1\sigma_3-18\al t\sigma_3.
\end{align}
We want to absorb the terms $\sigma_1\sigma_2$, $\sigma_1\sigma_3$
and $\sigma_2$ in $L(z)$ by using $(\al b\sigma_1+c\sigma_2)^2$ and
$W(z,t)$. So let
\begin{align}\label{e3.6}
L(z) =& \theta_1 (\al b \sigma_1+c\sigma_2)^2+ \theta_2 W(z,t)+V(z),
\end{align}
where $\theta_1$ and $\theta_2$ are constants which will be
determined later, $V(z)$ is the remaining part. By
\eqref{e3.3}-\eqref{e3.6}, comparing the coefficients of
$\sigma_1\sigma_2$,$\sigma_1\sigma_3$ and $\sigma_2$, we have
$$
\left\{\begin{array}{ll} bc=2 bc\theta_1 +2t\theta_2,&\quad (\sigma_1\sigma_2)\\
2c^2-3bd=2c^2\theta_1 -2t^2\theta_2,& \quad (\sigma_1\sigma_3)\\
 (2b^2-3ac)=2 b^2\theta_1 -2 \theta_2. &\quad (\sigma_2)
\end{array} \right.
$$
We solve the equations above to find
\begin{align}\label{e3.7}
\theta_1 =\dfrac{3 ( 2 a c^3 + 2 b^3 d -b^2 c^2 - 3 a b c d)}{6 a
c^3 +6 b^3 d-4 b^2 c^2 }, ~ \theta_2= \dfrac{c^2 (3ac-b^2)^2}{6 a
c^3 + 6 b^3 d-4 b^2 c^2 }, ~ t=\dfrac{b (3bd-c^2)}{c (3ac-b^2)}.
\end{align}
Using \eqref{e3.6} again, we obtain
\begin{align*}
V(z)=&A_1\al^2 \dsum_{i=1}^3z_i^2+A_2\dsum_{p<q}^3z_p^2z_q^2+A_3\al
\sigma_3,
\end{align*}
where
\begin{align*}
A_1=&b^2-b^2\theta_1 -2\theta_2, \quad
A_2=c^2-c^2\theta_1-2t^2\theta_2, \quad A_3=18t\theta_2-9ad.
\end{align*}

{\bf We claim that}  For $n\geq 4, 1\leq k\leq n-2$,
\begin{align}\label{e3.8}
\theta_1>0,\quad\theta_2> 0, \quad V(z)\geq 0.
\end{align}
 Notice that $W(z,t)$ is
nonnegative, so by \eqref{e3.6} we obtain \eqref{e3.2}. That is,
Theorem \ref{th3} is proved. Next, we will prove the claim.

\begin{lemma}\label{yl1}
For $n\geq 4$, $1\leq k\leq n-2$, we have
\par
(i) $3bd - c^2> 0$,\quad $3ac - b^2> 0$;
\par
(ii) $2 a c^3 + 2 b^3 d -b^2 c^2 - 3 a b c d>0$.
\end{lemma}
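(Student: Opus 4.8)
The plan is to clear all the binomials down to elementary polynomial inequalities by passing to the consecutive ratios. Put
$$\rho_1=\frac{b}{a}=\frac{n-k+1}{k},\qquad \rho_2=\frac{c}{b}=\frac{n-k}{k+1},\qquad \rho_3=\frac{d}{c}=\frac{n-k-1}{k+2},$$
so that $b=a\rho_1$, $c=a\rho_1\rho_2$, $d=a\rho_1\rho_2\rho_3$ and $a,\rho_1,\rho_2,\rho_3>0$. It is convenient to set $p=k+1$ and $q=n-k$; then the hypotheses $1\le k\le n-2$ and $n\ge 4$ become $p\ge 2$, $q\ge 2$ and $p+q=n+1\ge 5$, so in particular $(p,q)\ne(2,2)$, i.e. $(p-1)(q-1)\ge 2$. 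In these variables $\rho_1=(q+1)/(p-1)$, $\rho_2=q/p$, $\rho_3=(q-1)/(p+1)$.

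For (i) I would use the factorizations $3ac-b^2=a^2\rho_1(3\rho_2-\rho_1)$ and $3bd-c^2=a^2\rho_1^2\rho_2(3\rho_3-\rho_2)$; since the prefactors are positive it suffices to show $3\rho_2>\rho_1$ and $3\rho_3>\rho_2$, which after clearing denominators read $2pq-p-3q>0$ and $2pq-3p-q>0$ respectively. For the first: if $p=2$ it becomes $q-2>0$, which holds because $p=2$ forces $q\ge 3$; if $p\ge 3$ then $2pq-p-3q=q(2p-3)-p\ge 2(2p-3)-p=3p-6\ge 3>0$. The second is exactly the first with $p$ and $q$ interchanged, so it follows by the same argument (this interchange is the combinatorial reflection $k\mapsto n-1-k$, under which $(a,b,c,d)\mapsto(d,c,b,a)$ and $3ac-b^2\mapsto 3bd-c^2$).

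For (ii) I would divide the quantity $2ac^3+2b^3d-b^2c^2-3abcd$ by $b^2c^2>0$ and write $U=ac/b^2=\rho_2/\rho_1$, $W=bd/c^2=\rho_3/\rho_2$; since also $ad/(bc)=\rho_3/\rho_1=UW$, the desired inequality becomes $2U+2W-1-3UW>0$. Now $U=\dfrac{q(p-1)}{p(q+1)}$ and $W=\dfrac{p(q-1)}{q(p+1)}$, so putting $2U+2W-1-3UW$ over the common denominator $pq(p+1)(q+1)$, the numerator is $2q^2(p^2-1)+2p^2(q^2-1)-pq(p+1)(q+1)-3pq(p-1)(q-1)$; the $p^2q^2$ contributions cancel and what survives is
$$2\big(p^2q+pq^2-p^2-q^2-2pq\big)=2(p+q)(pq-p-q)=2(p+q)\big[(p-1)(q-1)-1\big]>0,$$
using $p+q\ge 5$ and $(p-1)(q-1)\ge 2$. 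This yields (ii).

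\textbf{The main obstacle.} The only non-mechanical point is the cancellation in (ii): one has to check that $2q^2(p^2-1)+2p^2(q^2-1)$ contributes $4p^2q^2$, that $pq(p+1)(q+1)+3pq(p-1)(q-1)=pq(4pq-2p-2q+4)$ also contributes $4p^2q^2$, and that the remaining lower-order terms assemble into $2(p+q)(pq-p-q)$. Beyond that, one just needs to stay alert to the two extremal configurations $k=1$ (i.e. $p=2$) and $k=n-2$ (i.e. $q=2$), where the inequalities in (i) are tight and where the hypothesis $n\ge 4$ — equivalently $(p,q)\ne(2,2)$ — is used essentially.
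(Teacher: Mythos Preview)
Your proof is correct and follows essentially the same route as the paper: both reduce the binomial inequalities to polynomial inequalities via the consecutive ratios $C_n^{k}/C_n^{k-1}$, etc., and the polynomials you obtain ($2pq-p-3q$, $2pq-3p-q$, and $(p+q)[(p-1)(q-1)-1]$) are literally the paper's $-2k^2+2nk-(n+1)$, $-2k^2+2(n-2)k+(n-3)$, and $(n+1)[-k^2+(n-1)k-1]$ after the substitution $p=k+1$, $q=n-k$. Your change of variables makes the $k\mapsto n-1-k$ symmetry explicit and yields cleaner factorizations, but the argument is otherwise the same.
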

\begin{proof}
(i) By \eqref{e3.1} we have
\begin{equation}\label{e3.9}
\dfrac{a}{b}=\dfrac{C_n^{k-1}}{C_n^{k}}=\dfrac{k}{n-k+1},\quad
\dfrac{b}{c}=\dfrac{C_n^{k}}{C_n^{k+1}}=\dfrac{k+1}{n-k},\quad
\dfrac{c}{d}=\dfrac{C_n^{k+1}}{C_n^{k+2}}=\dfrac{k+2}{n-k-1}.
\end{equation}
By \eqref{e3.9}, one has
\begin{align}\label{e3.10}
3bd-c^2=&c^2(\dfrac{3bd}{c^2}-1)=c^2\left(\dfrac{3(k+1)(n-k-1)}{(n-k)(k+2)}-1\right)\nonumber\\
=&c^2\cdot\dfrac{-2k^2+2(n-2)k+(n-3)}{(n-k)(k+2)}.
\end{align}
Since the following quadratic function
\begin{align*}
f_1(k):=-2k^2+2(n-2)k+(n-3)
\end{align*}
with respect to variable $k$ is increasing on the interval
$[1,\dfrac{n-2}{2}]$, decreasing on the interval
$[\dfrac{n-2}{2},n-2]$, and
$$
f_1(1)=3(n-3)> 0,\quad  f_1(n-2)=n-3> 0,
$$
 $f_1(k)>0$ on $[1,n-2]$. Combining \eqref{e3.10}, we obtain $3bd
- c^2> 0$.
\par
 Similarly,
\begin{equation*}
3ac-b^2=b^2\cdot\dfrac{-2k^2+2nk-n-1}{(n-k+1)(k+1)},
\end{equation*}
and the function $-2k^2+2nk-n-1$ respect to variable $k$ is positive
on $[1,n-2]$,  so we get $3ac- b^2>0$.

\par
(ii) By \eqref{e3.9}, we have
\begin{align*}
2 a c^3 + 2 b^3 d -b^2 c^2 - 3 a b c
d=&b^2c^2(\dfrac{2ac}{b^2}+\dfrac{2bd}{c^2}-1-\dfrac{3ad}{bc})\\
=&b^2c^2\cdot\dfrac{2 (n+1) (-k^2+kn-k-1)}{(k+1) (k+2) (n-k+1)
(n-k)}.
\end{align*}
It is easy to get $-k^2+(n-1)k-1>0$ for $1\leq k\leq n-2$.

\end{proof}

\begin{lemma}\label{yl2}
For $n\geq 4$, $1\leq k\leq n-2$, we have
\par
(i) $A_1>0$;
\par
(ii) $A_2>0$;
\par
(iii) $A_1A_2-A_3^2/36>0$.
\end{lemma}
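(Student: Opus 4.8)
The plan is to prove the three inequalities of Lemma~\ref{yl2} by reducing everything to explicit rational functions of $n$ and $k$, exactly as was done for Lemma~\ref{yl1}. Recall that $A_1=b^2-b^2\theta_1-2\theta_2$, $A_2=c^2-c^2\theta_1-2t^2\theta_2$, and $A_3=18t\theta_2-9ad$, where $\theta_1,\theta_2,t$ are given by \eqref{e3.7}. The first step is to substitute the formulas \eqref{e3.7} into $A_1$ and $A_2$ and simplify. A convenient bookkeeping device is to write $D:=6ac^3+6b^3d-4b^2c^2$ for the common denominator appearing in $\theta_1$ and $\theta_2$; note $D>0$ since by Lemma~\ref{yl1}(ii) we have $2ac^3+2b^3d-b^2c^2>0$, hence $6ac^3+6b^3d-4b^2c^2=3(2ac^3+2b^3d-b^2c^2)+(2b^3d+2ac^3-b^2c^2)>0$ (or more crudely $6ac^3+6b^3d>4b^2c^2$ follows from AM--GM type bounds on the binomial ratios). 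After clearing $D$, both $A_1$ and $A_2$ become polynomials in $a,b,c,d$; using the ratios \eqref{e3.9} these turn into rational functions of $n,k$ with a manifestly positive denominator, so positivity of $A_1$ and $A_2$ amounts to checking that an explicit polynomial in $k$ (with coefficients polynomial in $n$) is positive on the integer range $1\le k\le n-2$. As with $f_1(k)$ in the proof of Lemma~\ref{yl1}, I expect each such polynomial to be a downward- or mild-degree polynomial in $k$ whose positivity follows by evaluating at the endpoints $k=1$ and $k=n-2$ together with a concavity/monotonicity observation on the intervening interval.

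For part (iii), the quantity $A_1A_2-A_3^2/36$ is what actually controls nonnegativity of the quadratic form $V(z)=A_1\al^2\sum z_i^2+A_2\sum_{p<q}z_p^2z_q^2+A_3\al\sigma_3$; indeed, viewing $V$ as a quadratic form in the "variables" $\al\cdot 1$ and the symmetric monomials, and using $\sigma_3^2=z_1^2z_2^2z_3^2\le$ (by AM--GM) $\tfrac13(\sum z_i^2)\cdot\tfrac13\cdot 3(\sum_{p<q}z_p^2z_q^2)$ — more precisely the elementary inequality $9\sigma_3^2\le (\sum z_i^2)(\sum_{p<q}z_p^2z_q^2)$ is false in general, so one must instead use $\sigma_3^2 \le \tfrac{1}{3}\big(\sum_{p<q} z_p^2 z_q^2\big)\cdot \max_i z_i^2$ or a sharper symmetric estimate — one completes the square to bound $V$ below by a multiple of $A_1A_2-A_3^2/36$. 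So the structure of the argument is: first establish (i) and (ii) by the rational-function method, then show (iii) by the same method (substitute \eqref{e3.7}, clear denominators, reduce to a polynomial inequality in $k$), and finally observe that (i)--(iii) together with the relevant elementary symmetric-function inequality for three reals give $V(z)\ge 0$, which closes the claim \eqref{e3.8} and hence Theorem~\ref{th3}.

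The main obstacle I anticipate is part (iii): after substituting \eqref{e3.7} the expression $A_1A_2-A_3^2/36$ has denominator $D^2$ and a numerator that is a polynomial of fairly high degree in $a,b,c,d$ (degree $4$ homogeneous before using the ratios), so translating it into a rational function of $n,k$ produces a polynomial in $k$ of degree possibly as large as $6$ or $8$ with coefficients that are polynomials in $n$. Verifying positivity of such a polynomial on $1\le k\le n-2$ by endpoint evaluation plus convexity may not be immediate if the polynomial is not unimodal; the likely remedy is to factor out the positive quantities already identified in Lemma~\ref{yl1} (namely $3bd-c^2$, $3ac-b^2$, and $-k^2+(n-1)k-1$), after which the residual factor should again be a low-degree polynomial in $k$ amenable to the endpoint argument. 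A secondary subtlety is making sure the reduction $V(z)\ge0 \Leftrightarrow$ (i)--(iii) is actually correct for three real variables — one should double-check the exact symmetric inequality relating $\sigma_3^2$, $\sum z_i^2$, and $\sum_{p<q}z_p^2 z_q^2$ that makes the completion of the square valid, rather than appealing to a false AM--GM shortcut.
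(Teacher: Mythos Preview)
Your strategy --- substitute \eqref{e3.7} into $A_1,A_2,A_3$, clear the common denominator $D=6ac^3+6b^3d-4b^2c^2$, use the ratios \eqref{e3.9} to reduce everything to polynomial inequalities in $k$, and then verify positivity on $[1,n-2]$ via endpoint values plus monotonicity --- is exactly what the paper does. But what you have written is a plan, not a proof: none of the substitutions are carried out, and two of your anticipations are off.

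First, your fear that part (iii) produces a polynomial in $k$ of degree $6$ or $8$ is unfounded. When the paper actually performs the computation it finds
\[
A_1A_2-\frac{A_3^2}{36}=\frac{9a^2b^2c^2d^2}{4(3ac^3+3b^3d-2b^2c^2)}\cdot\frac{4(n+1)^2\,f_4(k)}{k^2(k+1)(k+2)(n-k+1)(n-k)(n-k-1)^2},
\]
where $f_4(k)=-(n+5)k^2+(n^2+4n-5)k-n^2+1$ is only quadratic in $k$; it is concave with $f_4(1)=f_4(n-2)=3(n-3)>0$, so the endpoint argument closes it immediately. No preliminary factoring-out of $3bd-c^2$ or $3ac-b^2$ is needed. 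Parts (i) and (ii) likewise reduce to cubics $f_2(k)$ and $f_3(k)$, handled by locating the single interior critical point and evaluating at $k=1$ and $k=n-2$.

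Second, your discussion of $V(z)\ge 0$ does not belong to Lemma~\ref{yl2} at all: the lemma asserts only (i)--(iii), and the passage to $V(z)\ge 0$ is the separate proof of the Claim \eqref{e3.8}. More to the point, your assertion that $9\sigma_3^2\le\bigl(\sum_i z_i^2\bigr)\bigl(\sum_{p<q}z_p^2z_q^2\bigr)$ ``is false in general'' is itself wrong: with $w_i=z_i^2\ge 0$, AM--GM gives $\sigma_1(w)\ge 3\sigma_3(w)^{1/3}$ and $\sigma_2(w)\ge 3\sigma_3(w)^{2/3}$, hence $\sigma_1(w)\sigma_2(w)\ge 9\sigma_3(w)$, which is exactly that inequality. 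The paper uses the pairwise form $A_1\alpha^2 z_i^2+A_2 z_j^2z_k^2\ge 2\sqrt{A_1A_2}\,|\alpha\sigma_3|$ together with (iii) in the form $6\sqrt{A_1A_2}>|A_3|$ to get $V(z)\ge 0$. There is no ``false AM--GM shortcut'' to worry about; the direct completion of the square works.
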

\par
\begin{proof}
(i) Substituting \eqref{e3.7} into $A_1$, we have
\begin{align}\label{e3.11}
A_1=&b^2(1-\theta_1) -2\theta_2=\dfrac{9ab^3cd-b^4c^2}{6 a c^3 +6
b^3 d-4 b^2 c^2 }-\dfrac{2c^2 (3ac-b^2)^2}{6 a c^3 + 6 b^3 d-4
b^2 c^2 }\nonumber\\
=&\dfrac{3c(4 a b^2 c^2 + 3 a b^3 d- 6 a^2 c^3 -b^4 c)}{6 a c^3 +6
b^3 d-4 b^2 c^2 }\nonumber\\
=&\dfrac{3ab^2c^3}{2c^2(3 a c-b^2)+2
b^2(3bd-c^2)}\cdot(4+\dfrac{3bd}{c^2}-
\dfrac{6ac}{b^2}-\dfrac{b^2}{ac}).
\end{align}
By \eqref{e3.9},
\begin{align*}
(4+\dfrac{3bd}{c^2}- \dfrac{6ac}{b^2}-\dfrac{b^2}{ac})
=&\dfrac{2(n+1)[-k^3+(n-5)k^2+(3n-2)k-n-1]}{k (k+1) (k+2 ) (n-k+1)
(n-k)}.
\end{align*}
\par
Let us show the cubic function
$$
f_2(k):=-k^3+(n-5)k^2+(3n-2)k-n-1>0.
$$
Differentiating $f_2(k)$, we solve the quadratic equation
$$
f_2'(k)=-3k^2+2(n-5)k+3n-2=0.
$$
The quadratic formula gives
$$
k_1=\dfrac{2(n-5)-\sqrt{4n^2-4n+76}}{6},\quad
k_2=\dfrac{2(n-5)+\sqrt{4n^2-4n+76}}{6}.
$$
It is easy to get
$$
k_1<\dfrac{2(n-5)-(2n-1)}{6}<0, $$
$$
\dfrac{4n-11}{6}=\dfrac{2(n-5)+(2n-1)}{6}<k_2<\dfrac{2(n-5)+3n}{6}=\dfrac{5(n-2)}{6},
$$
and $f_2'(k)\geq 0$ on $[1,k_2]$, $f_2'(k)\leq 0$ on $[k_2, n-2]$.
So function $f_2(k)$ is increasing on $[1,k_2]$ and decreasing on
$[k_2, n-2]$. On the other hand, $f_2(1)=f_2(n-2)=3(n-3)>0$, this
implied $f_2(k)>0$ on $[1, n-2]$.  Combining \eqref{e3.11} and (i)
of Lemma \ref{yl1}, we obtain $A_1>0$.

\vskip 3mm
\par
 (ii) Substituting \eqref{e3.7} into $A_2$, we have
\begin{align}\label{e3.12}
 A_2=&c^2(1-\theta_1)-2t^2\theta_2=\dfrac{9abc^3d-b^2c^4}{6 a c^3 +6
b^3 d-4 b^2 c^2 }-\dfrac{2b^2 (3bd-c^2)^2}{6 a c^3 + 6 b^3 d-4
b^2 c^2 }\nonumber\\
=&\dfrac{3 b (4 b^2 c^2 d + 3 a c^3 d - 6 b^3 d^2-b c^4)}{6 a c^3 +6
b^3 d-4 b^2 c^2 }\nonumber\\
=&\dfrac{3b^3c^2d}{2c^2(3 a c-b^2)+2
b^2(3bd-c^2)}\cdot(4+\dfrac{3ac}{b^2}-
\dfrac{6bd}{c^2}-\dfrac{c^2}{bd}).
\end{align}
By using \eqref{e3.9}, we get
\begin{align*}
(4+\dfrac{3ac}{b^2}- \dfrac{6bd}{c^2}-\dfrac{c^2}{bd})
=&\dfrac{2(n+1)[k^3-(2n+2)k^2+(n^2+3n-5)k-n^2+ 2 n-3]}{(k+1) (k+2 )
(n-k+1) (n-k)(n-k-1)}.
\end{align*}
Denote
$$
f_3(k):=k^3-(2n+2)k^2+(n^2+3n-5)k-n^2+ 2 n-3.
$$
Differentiating $f_3(k)$ and solving the quadratic equation
$$
f_3'(k)=3k^2-2(2n+2)k+n^2+3n-5=0,
$$
we get
$$
k_1=\dfrac{2(2n+2)-\sqrt{4n^2-4n+76}}{6},\quad
k_2=\dfrac{2(2n+2)+\sqrt{4n^2-4n+76}}{6}.
$$
Obviously,
$$
\dfrac{n+4}{6}=\dfrac{2(2n+2)-3n}{6}<k_1<\dfrac{2(2n+2)-(2n-1)}{6}=\dfrac{2n+5}{6},
$$
$$
k_2>\dfrac{2(2n+2)+2n-1}{6}>n,
$$
and $f_3'(k)\geq 0$ on $[1,k_1]$, $f_3'(k)\leq 0$ on $[k_1, n-2]$.
So function $f_3(k)$ is increasing on $[1,k_1]$ and decreasing on
$[k_1, n-2]$. Since $f_3(1)=f_3(n-2)=3(n-3)>0$, we have $f_3(k)>0$
on $[1, n-2]$. Combining \eqref{e3.12} and (i) of Lemma \ref{yl1},
we obtain  $A_2>0$.

\par
(iii) Substituting \eqref{e3.7} into $A_3$, we have
\begin{align*}
A_3=&18t\theta_2-9ad=\dfrac{9bc(3ac-b^2)(3bd-c^2)}{3 a c^3 +3 b^3
d-2 b^2 c^2 }-9ad\\
=&\dfrac{9 (b^3 c^3 - 3 a b c^4 - 3 b^4 c d + 11 a b^2 c^2 d - 3 a^2
c^3 d -
   3 a b^3 d^2)}{3 a c^3 +3 b^3 d-2 b^2 c^2}.
\end{align*}
By using the expressions $A_1$ and $A_2$ in \eqref{e3.11} and
\eqref{e3.12} respectively, we have
\begin{align*}
&A_1A_2-A_3^2/36\\
=&\dfrac{9 ( 12 a b^3 c^3 d+ 20 a^2 b^2 c^2 d^2-a b^2 c^5 -b^5 c^2 d
- 12 a^2 b c^4 d -
   12 a b^4 c d^2  - 3 a^3 c^3 d^2 -
   3 a^2 b^3 d^3)}{4 (3 a c^3 + 3 b^3 d-2 b^2 c^2 )}\\
=&\dfrac{9a^2b^2c^2d^2 }{4 (3 a c^3 + 3 b^3 d-2 b^2 c^2 )} \cdot
\left( \dfrac{12bc}{ad}+ 20-\dfrac{c^3}{ad^2} -\dfrac{b^3}{a^2d}
-\dfrac{12c^2}{bd}-\dfrac{12b^2}{ac}
 -\dfrac{3ac}{b^2}-\dfrac{3bd}{c^2}\right) .
\end{align*}
From \eqref{e3.9}, we get
\begin{align*}
&( \dfrac{12bc}{ad}+ 20-\dfrac{c^3}{ad^2} -\dfrac{b^3}{a^2d}
-\dfrac{12c^2}{bd}-\dfrac{12b^2}{ac}
 -\dfrac{3ac}{b^2}-\dfrac{3bd}{c^2})\\
 =&\dfrac{4 (n+1)^2 [- (n+5) k^2 + (n^2+4n-5)k- n^2+1 ]}{k^2 (k+1) (k+2) (n-k+1) (n-k) (n-k-1)^2}.
\end{align*}
Denote
$$
f_4(k):=- (n+5) k^2 + (n^2+4n-5)k- n^2+1.
$$
It is easy to get that function $f_4(k)$ is increasing on
$\Big[1,\dfrac{n-1}{2}\Big]$ and decreasing on
$\Big[\dfrac{n-1}{2},n-2\Big]$, and
$$
f_4(1)=f_4(n-2)=3(n-3)>0,
$$
so we have $f_4(k)>0$ for $1\leq k\leq n-2$. Combining formulas
above we obtain
$$A_1A_2-A_3^2/36>0.$$
\end{proof}

\par
{\bf $\bullet$ Proof of the Claim \eqref{e3.8}.}
\par
Lemma \ref{yl1} implies $\theta_1>0, \theta_2>0$.
 By Lemma
\ref{yl2},
\begin{align*}
A_1\al^2 z_1^2+A_2z_2^2z_3^2\geq&
2\sqrt{A_1A_2}|\al\sigma_3|>\dfrac{1}{3}|A_3\al\sigma_3|,\\
A_1\al^2z_2^2+A_2z_1^2z_3^2\geq&
2\sqrt{A_1A_2}|\al\sigma_3|>\dfrac{1}{3}|A_3\al\sigma_3|,\\
A_1\al^2z_3^2+A_2z_1^2z_2^2\geq&
2\sqrt{A_1A_2}|\al\sigma_3|>\dfrac{1}{3}|A_3\al\sigma_3|.
\end{align*}
Combing the inequalities above, we get
\begin{align*}
V(z)=&A_1\al^2\dsum_{i=1}^3z_i^2+A_2\dsum_{p<q}^3z_p^2z_q^2+A_3\al\sigma_3\geq
0.
\end{align*}



\end{document}